\theoremstyle{plain}
\newtheorem{thm}{Theorem}[section]
\newtheorem*{thm*}{Theorem}
\newtheorem*{cor*}{Corollary}
\newtheorem{prop}[thm]{Proposition}
\newtheorem{lem}[thm]{Lemma}
\newtheorem{cor}[thm]{Corollary}
\newtheorem{claim}{Claim}
\newtheorem*{claim*}{Claim}
\theoremstyle{definition}
\newtheorem{ex}[thm]{Example}
\newtheorem{rem}[thm]{Remark}
\newtheorem{fact}[thm]{Fact}
\theoremstyle{remark}
\newtheorem*{proof of claim}{{\sl Proof of Claim}}
\numberwithin{equation}{thm}
\def\Tor{\operatorname{Tor}}
\def\mod{\mathrm{mod}}
\newcommand{\rme}{\mathrm{e}}
\newcommand{\calC}{\mathcal{C}}
\newcommand{\calG}{\mathcal{G}}
\newcommand{\calL}{\mathcal{L}}
\newcommand{\calR}{\mathcal{R}}
\newcommand{\calS}{\mathcal{S}}
\newcommand{\fka}{\mathfrak{a}}
\newcommand{\fkm}{\mathfrak{m}}
\newcommand{\fkq}{\mathfrak{q}}
\newcommand{\mapright}[1]{%
\smash{\mathop{%
\hbox to 1cm{\rightarrowfill}}\limits^{#1}}}
\newcommand{\mapleft}[1]{%
\smash{\mathop{%
\hbox to 1cm{\leftarrowfill}}\limits_{#1}}}
\def\depth{\operatorname{depth}}
\def\Ass{\operatorname{Ass}}
\def\height{\mathrm{ht}}
\def\sgn{\operatorname{sgn}}
\title[integrally closed ideals of reduction number three]{Integrally closed ideals of reduction number three}
\author{Shinya Kumashiro}
\address{National Institute of Technology (KOSEN), Oyama College, 771 Nakakuki, Oyama, Tochigi, 323-0806, Japan}
\email{skumashiro@oyama-ct.ac.jp}
\thanks{2010 {\em Mathematics Subject Classification.} 13D40, 13A30, 13H10}
\thanks{{\em Key words and phrases.} Hilbert function, Rees algebra, associated graded ring, Sally module, Cohen-Macaulay ring}
\thanks{The author was supported by JSPS KAKENHI Grant Number JP21K13766.}
\begin{document}

\begin{abstract}
In a Cohen-Macaulay local ring $(A, \mathfrak{m})$, we study the Hilbert function of an integrally closed $\mathfrak{m}$-primary ideal $I$ whose reduction number is three. With a mild assumption we give an inequality $\ell_A(A/I) \ge \mathrm{e}_0(I) - \mathrm{e}_1(I) + \dfrac{\mathrm{e}_2(I) + \ell_A(I^2/QI)}{2}$, where $\mathrm{e}_i(I)$ denotes the $i$th Hilbert coefficients and $Q$ denotes a minimal reduction of $I$. The inequality is located between inequalities of Itoh and Elias-Valla. Furthermore our inequality becomes an equality if and only if the depth of the associated graded ring of $I$ is larger than or equal to $\dim A-1$. We also study the Cohen-Macaulayness of the associated graded rings of determinantal rings.
\end{abstract}

\maketitle

\section{Introduction}\label{section1}

The purpose of this paper is to study the Hilbert function of ideals. Let $(A, \fkm)$ be a Cohen-Macauay local ring of dimension $d$, and let $I$ be an $\fkm$-primary ideal. Then the {\it Hilbert function} of $I$ is defined by the length $\ell_A(A/I^{n+1})$ of the residue ring $A/I^{n+1}$ for $n\ge 0$. It is well-known that there exist integers $\rme_0(I), \rme_1(I), \dots, \rme_d(I)$ such that the Hilbert function agrees with a polynomial 
\[
\ell_A (A/I^{n+1})=\rme_0(I)\binom{n+d}{d}-\rme_1(I)\binom{n+d-1}{d-1}+ \cdots + (-1)^{d}\rme_d(I)
\]
in $n$ of degree $d$ for $n\gg 0$. $\rme_0(I), \rme_1(I), \dots, \rme_d(I)$ are called the {\it Hilbert coefficients} of $I$. 
Because the difference $\ell_A(A/I^{n+1})-\ell_A(A/I^{n})=\ell_A(I^n/I^{n+1})$ is the length of the $n$-th homogeneous component of the associated graded rings, it is natural to expect that the Hilbert function reflects the structures of the Rees algebras and the associated graded rings. 
One of the famous results in this direction is the following due to Northcott, Huneke, and Ooishi (\cite{No, H, O}). In what follows, throughout this paper, let $(A, \fkm)$ be a Cohen-Macaulay local ring of dimension $d \ge 2$ and, let $I$ be an $\fkm$-primary ideal. 
For simplicity, assume that $A/\fkm$ is infinite.

\begin{fact}\label{Northcott} {\rm (\cite{No, H, O})}
With the assumptions above, we obtain $\ell_A(A/I)\ge \rme_0(I) - \rme_1(I)$. The equality holds if and only if $I^2=QI$ for some (any) minimal reduction $Q\subseteq I$. When this is the case, the Rees algebra of $I$ and the associated graded ring of $I$ are Cohen-Macaulay rings.
\end{fact}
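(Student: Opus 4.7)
The plan is to reduce the statement to dimension one via a superficial regular sequence, prove the inequality there by a direct length computation, and handle the equality case and the Cohen--Macaulayness assertions through the Valabrega--Valla criterion.

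Since $A/\fkm$ is infinite, I choose a minimal reduction $Q = (x_1, \ldots, x_d)$ of $I$ whose generators form a superficial sequence for $I$; because $Q$ is a parameter ideal in the Cohen--Macaulay ring $A$, this sequence is automatically $A$-regular. Setting $\bar A = A/(x_1, \ldots, x_{d-1})$ and $\bar I = I \bar A$, the standard behavior of Hilbert coefficients under superficial quotients gives $\rme_i(\bar I) = \rme_i(I)$ for $i = 0, 1$, while $\ell_A(A/I) = \ell_{\bar A}(\bar A/\bar I)$ because each $x_i \in I$. This reduces the inequality to the case $d = 1$, $Q = (x)$, with $x$ an $A$-regular element.

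In dimension one, multiplication by $x$ induces an isomorphism $A/I^n \xrightarrow{\sim} xA/xI^n$. Combined with $\rme_0(I) = \ell_A(A/xA)$, this yields
\[
\ell_A(I^n/I^{n+1}) = \rme_0(I) - \ell_A(I^{n+1}/xI^n) \qquad (n \ge 0).
\]
Summing from $n = 0$ to $N \gg 0$ and comparing with the Hilbert polynomial $\ell_A(A/I^{N+1}) = \rme_0(I)(N+1) - \rme_1(I)$ produces
\[
\ell_A(A/I) - \rme_0(I) + \rme_1(I) = \sum_{n \ge 1} \ell_A(I^{n+1}/xI^n),
\]
a finite sum of nonnegative terms, proving the inequality. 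Equality is equivalent to $I^{n+1} = xI^n$ for every $n \ge 1$, and since $I^{n+1} = I^2 \cdot I^{n-1} = xI \cdot I^{n-1} = xI^n$ the single condition $I^2 = xI$ suffices.

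For general $d$, I would proceed by induction, proving the equality case and the Cohen--Macaulayness of $\gr_I(A)$ together. The implication $I^2 = QI \Rightarrow \gr_I(A)$ Cohen--Macaulay follows because $I^{n+1} = QI^n$ for all $n$, so the Valabrega--Valla criterion guarantees that the initial forms of $x_1, \ldots, x_d$ constitute a regular sequence in $\gr_I(A)$. Conversely, in the equality case the Valabrega--Valla equality $(x_1,\ldots,x_{d-1}) \cap I^2 = (x_1,\ldots,x_{d-1})I$ --- available because the quotient associated graded ring is Cohen--Macaulay by induction --- upgrades $\bar I^2 = \bar x_d \bar I$ in $\bar A$ to $I^2 = QI$ in $A$. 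The Cohen--Macaulayness of the Rees algebra $\bigoplus_{n \ge 0} I^n$ then follows from that of $\gr_I(A)$ by a standard $a$-invariant (or Goto--Shimoda) argument using $d \ge 2$. The main delicate aspect of this plan is coordinating the superficial, $A$-regular, and Valabrega--Valla conditions simultaneously through the induction --- a standard but technically careful maneuver enabled by the infinite residue field.
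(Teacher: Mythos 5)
The paper does not actually prove this statement: Fact \ref{Northcott} is quoted with citations to Northcott, Huneke, and Ooishi, so your proposal can only be measured against the standard literature argument --- which is indeed the route you follow (quotient by a superficial sequence, a one-dimensional length computation, Valabrega--Valla for the equality case). Your treatment of the inequality is correct and complete: with $A/\fkm$ infinite, any minimal reduction $Q$ admits generators forming a superficial $A$-regular sequence, the coefficients $\rme_0,\rme_1$ and $\ell_A(A/I)$ are preserved modulo the first $d-1$ of them, and your identity $\ell_A(A/I)-\rme_0(I)+\rme_1(I)=\sum_{n\ge 1}\ell_A(I^{n+1}/xI^n)$ in dimension one is exactly Northcott's argument. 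The implication ($I^2=QI$ $\Rightarrow$ $\calG(I)$ Cohen--Macaulay) is also fine, since $I^{n+1}=QI^n$ for all $n$ gives $Q\cap I^n=I^n=QI^{n-1}$ for $n\ge 2$, so the Valabrega--Valla criterion applies directly.

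The genuine gap is in the converse direction. You claim the equality $(x_1,\dots,x_{d-1})\cap I^2=(x_1,\dots,x_{d-1})I$ is ``available because the quotient associated graded ring is Cohen--Macaulay by induction,'' attributing this to Valabrega--Valla. But the Valabrega--Valla criterion relates that intersection condition to the regularity of the initial forms $x_1^*,\dots,x_{d-1}^*$ on $\calG(I)$ \emph{itself}, not to any property of $\calG(\bar I)$ over $\bar A=A/(x_1,\dots,x_{d-1})$; a priori the natural surjection $\calG(I)/(x_1^*,\dots,x_{d-1}^*)\twoheadrightarrow \calG(\bar I)$ need not be injective, and that is precisely the point at issue. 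The bridge from ``$\depth \calG(\bar I)\ge 1$'' to ``$x_1^*,\dots,x_{d-1}^*$ is a $\calG(I)$-regular sequence'' is Sally's descent lemma (``Sally's machine,'' see Rossi--Valla's book or Huckaba--Marley), a nontrivial result that your plan never names or proves; it cannot be extracted from the Valabrega--Valla criterion plus induction alone. Once it is invoked, your induction does close correctly: descent gives the intersection equality, hence $I^2\subseteq x_dI+\bigl((x_1,\dots,x_{d-1})\cap I^2\bigr)=QI$, Cohen--Macaulayness of $\calG(I)$ follows since $\calG(\bar I)\cong\calG(I)/(x_1^*,\dots,x_{d-1}^*)$, and the Rees algebra statement follows by Goto--Shimoda as you say. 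So the architecture is the standard one, but as written the converse rests on a misattributed and unproved key lemma.
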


With the above result as a prototype, Sally \cite{S} explored the equality $\ell_A(A/I)= \rme_0(I) - \rme_1(I)+1$, and Goto, Nishida, and Ozeki \cite{GNO, GNO2} finally characterized this equality by using the notion of the Sally modules. In particular, they proved that $I^3=QI^2$ and $\fkm I^2\subseteq QI$ if the equality $\ell_A(A/I)= \rme_0(I) - \rme_1(I)+1$ holds.

Moreover, when $I$ is integrally closed, the following is known.

\begin{fact} {\rm (\cite[Theorem 12]{I}, \cite[Theorem 2.1]{EV}, \cite[Corollary 2.10]{OR})} \label{ORossi} 
Suppose that $I$ is an integrally closed ideal. Then the following two inequalities hold:
\begin{enumerate}[{\rm (a)}] 
\item $\ell_A(A/I)\le \rme_0(I) - \rme_1(I) + \rme_2(I)$;
\item $\ell_A(A/I)\ge \rme_0(I) - \rme_1(I) + \ell_A(I^2/QI)$.
\end{enumerate} 
The inequality (b) becomes an equality if and only if $I^3=QI^2$ for some (any) minimal reduction $Q\subseteq I$. When this is the case, $\rme_2(I)=\ell_A(I^2/QI)$ and the associated graded ring of $I$ is a Cohen-Macaulay ring.
\end{fact}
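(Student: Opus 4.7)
The plan is to reduce to dimension one by a superficial element descent, invoke the standard length formulas expressing the Hilbert coefficients as sums $\sum_n \ell_A(I^{n+1}/QI^n)$, and use Itoh's intersection theorem $Q \cap I^2 = QI$ (valid for integrally closed $I$) as the decisive input. Since $A/\fkm$ is infinite I pick a minimal reduction $Q = (x_1, \dots, x_d)$ of $I$ whose generators form a superficial sequence for $I$, and quotient by $x_1, \dots, x_{d-1}$. The quantities $\ell_A(A/I)$ and the relevant $\rme_i(I)$ are preserved by classical superficial theory; preservation of $\ell_A(I^2/QI)$ reduces to the intersection identity $I^2 \cap (x_1, \dots, x_{d-1}) = (x_1, \dots, x_{d-1})\, I$, which follows from Itoh's theorem since $(x_1,\dots,x_{d-1}) \subseteq Q$ and thus $I^2 \cap (x_1,\dots,x_{d-1}) \subseteq I^2 \cap Q = QI$.

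In the resulting one-dimensional situation with minimal reduction $(x)$, a direct manipulation of the Hilbert function (using that $\ell_A(I^k/xI^k) = \rme_0(I)$ since $x$ is a non-zerodivisor and $I^k$ is $\fkm$-primary of rank one) yields
\[
\rme_1(I) - \rme_0(I) + \ell_A(A/I) = \sum_{n \ge 1} \ell_A(I^{n+1}/xI^n).
\]
Retaining only the $n=1$ term on the right yields inequality (b) immediately. For inequality (a) one either descends only $d-2$ steps, so that $\rme_2(I)$ remains a coefficient of the Hilbert polynomial and a parallel manipulation applies, or invokes the Sally--Vasconcelos identity
\[
\rme_2(I) = \sum_{n \ge 1} n \cdot \ell_A(I^{n+1}/xI^n);
\]
since $\ell_A(I^{n+1}/xI^n) \le n \cdot \ell_A(I^{n+1}/xI^n)$ termwise for $n \ge 1$, one obtains $\ell_A(A/I) - \rme_0(I) + \rme_1(I) \le \rme_2(I)$, which is (a).

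For the equality case in (b), the vanishing $\ell_A(I^{n+1}/xI^n) = 0$ for $n \ge 2$ in the one-dimensional reduction says $I^{n+1} = xI^n$ there for $n \ge 2$, which by the Valabrega--Valla criterion lifts to $I^3 = QI^2$ in $A$ and simultaneously yields Cohen--Macaulayness of $\gr_I(A)$. With $\gr_I(A)$ Cohen--Macaulay the Hilbert function agrees with its polynomial at $n = 0$, giving $\ell_A(A/I) = \rme_0(I) - \rme_1(I) + \rme_2(I)$; combined with equality in (b), this forces $\rme_2(I) = \ell_A(I^2/QI)$.

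The decisive technical input is Itoh's theorem $Q \cap I^2 = QI$ for integrally closed $I$: its proof relies on Brian\c{c}on--Skoda-type integral-closure arguments, and both inequalities can fail without this hypothesis (even the equality-iff characterization in (b) depends on it, since the Valabrega--Valla lift in Step~3 requires the same intersection identity). A secondary subtlety is the validity of the $\rme_2$ identity after reduction to dimension one; descending only to dimension two avoids this at the cost of slightly more bookkeeping. Once these inputs are in hand, the remaining steps are routine manipulation of Hilbert functions together with Valabrega--Valla.
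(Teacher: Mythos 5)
Your proposal for inequality (b) is essentially sound, and you correctly identified Itoh's intersection theorem $Q\cap I^2=QI$ as the decisive input; note only that the paper itself does not prove this Fact (it quotes Itoh, Elias--Valla and Ozeki--Rossi), so your argument must stand alone, and that your preservation step has a small hole: $I^2\cap(x_1,\dots,x_{d-1})\subseteq I^2\cap Q=QI$ does not yet give $(x_1,\dots,x_{d-1})I$; you still need $QI\cap(x_1,\dots,x_{d-1})=(x_1,\dots,x_{d-1})I$, which follows from $(x_1,\dots,x_{d-1}):x_d=(x_1,\dots,x_{d-1})$. The genuine failure is part (a). The identity $\rme_2(I)=\sum_{n\ge1}n\,\ell_A(I^{n+1}/QI^n)$ you invoke is not a theorem: together with its companion $\rme_1(I)=\sum_{n\ge0}\ell_A(I^{n+1}/QI^n)$ it is \emph{equivalent} (Huckaba--Marley type results) to $\depth\calG(I)\ge d-1$, and this fails for integrally closed ideals --- the paper's Example 2.6 (from \cite{OR}) is the maximal ideal, hence integrally closed, with $\depth\calG(\fkm)=0$. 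So invoking that identity assumes a depth hypothesis that is false in general, and is moreover circular given that the equality case of the Fact is a depth statement. The fallback ``descend $d-2$ steps and apply a parallel manipulation'' also does not work: in dimension two the telescoping computation acquires correction terms for \emph{every} $n$ (e.g.\ $\ell_A(\widetilde{I^{n+1}}/I^{n+1})$ in the Ratliff--Rush approach, or $\ell_A(Q\cap I^{n+1}/QI^n)$), Itoh's theorem controls only the $n=1$ term, and integral closedness (even Ratliff--Rush closedness) is \emph{not} inherited by the image ideal in $A/(x_1,\dots,x_{d-2})$ --- this non-descent is exactly why (a), which is Itoh's Theorem 12 in \cite{I}, is deep; its known proof is cohomological (via the normalized blowup of the Rees algebra) and cannot be recovered by routine manipulation from $Q\cap I^2=QI$ alone.

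The equality case of (b) also has a circularity. From the one-dimensional reduction you get $\bar{I}^{n+1}=\bar{x}\bar{I}^n$ for $n\ge 2$, but lifting this to $I^3=QI^2$ ``by the Valabrega--Valla criterion'' presupposes $Q\cap I^{n+1}=QI^n$ in $A$ for \emph{all} $n$, and for $n\ge2$ that condition already encodes $I^{n+1}=QI^n$ --- the very statement being proved; Itoh supplies only $n=1$. A correct lift exists but needs an argument you do not give: the intersection property (not integral closedness) descends, i.e.\ $\bar{Q}\cap\bar{I}^2=\bar{Q}\bar{I}$ holds in each successive quotient (the same computation as your preservation step); in dimension one this together with $\bar{I}^{n+1}=\bar{x}\bar{I}^n$ for $n\ge2$ makes $\calG(\bar{I})$ Cohen--Macaulay by Valabrega--Valla, and one then climbs back up one dimension at a time using Sally's machine (if $\depth\calG(I/(x))\ge1$ for $x$ superficial, then the initial form $x^*$ is $\calG(I)$-regular), which lifts the reduction number and the Cohen--Macaulayness simultaneously. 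Alternatively, the route taken by the cited source \cite{OR} and by the paper's own Proposition 2.1: $Q\cap I^2=QI$ forces $L$ to be a free module, equality in (b) forces the Vaz Pinto module $C$ to vanish, and $C=0$ yields $I^{n+1}=Q^{n-1}I^2$ for all $n\ge2$ plus Cohen--Macaulayness. Finally, your closing claim that Cohen--Macaulayness of $\calG(I)$ alone makes the Hilbert function agree with its polynomial at $n=0$ is false in general; it holds here only because the reduction number is at most $2\le d$ (Marley), and that caveat should be stated since it is how $\rme_2(I)=\ell_A(I^2/QI)$ is extracted.
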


In addition, Ozeki and Rossi \cite[Theorem 1.2]{OR} determined the structure of the Sally modules of integrally closed ideals if the equality $\ell_A(A/I)= \rme_0(I) - \rme_1(I) + \ell_A(I^2/QI)+1$ holds. They also showed that, when this is the case, $I^4=QI^3$ and $\fkm I^3 \subseteq QI^2$ hold.

Based on these above results, in this paper, we investigate the Hilbert function of an integrally closed ideal $I$ such that $I^4=QI^3$ for some minimal reduction $Q\subseteq I$. Note that the {\it reduction number}, the minimal number $n\ge 0$ satisfying $I^{n+1}=QI^n$, depends on the choice of a minimal reduction $Q$ in general (see for example \cite{Hu, Ma1, Ma2}).  
The main result of this paper is stated as follows. 

\begin{thm} \label{mainmain}
Suppose that $I$ is an integrally closed ideal such that $I^4=QI^3$ and $\fkm I^3\subseteq QI^2$ for some minimal reduction $Q\subseteq I$. Then 
\begin{align}\label{maineq} 
\ell_A(A/I) \ge \rme_0(I) - \rme_1(I) + \dfrac{\rme_2(I) + \ell_A(I^2/QI)}{2}
\end{align}
holds. The equality holds if and only if $\depth \calG(I) \ge d-1$, where $\calG(I)$ denotes the associated graded ring of $I$. Furthermore, if the inequality {\rm (\ref{maineq})} becomes an equality, then we have 
\begin{align*} 
\ell_A(A/I^{n+1})=&\rme_0(I)\binom{n+d}{d}-[\rme_0(I)-\ell_A (A/I)+\ell_A(I^2/QI)+\ell_A(I^3/QI^2)]\binom{n+d-1}{d-1}\\
&+[\ell_A(I^2/QI) + 2\ell_A(I^3/QI^2)]\binom{n+d-2}{d-2}- \ell_A(I^3/QI^2)\binom{n+d-3}{d-3}
\end{align*}
for all $n\ge 1$. (If $d=2$, then we regard as $\binom{n+d-3}{d-3}=0$.)
\end{thm}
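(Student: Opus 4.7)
The plan is to analyze the Sally module $S := S_Q(I) = \bigoplus_{n\ge 1} I^{n+1}/QI^n$ viewed as a graded module over the polynomial subalgebra $T := A[Qt]$ of the Rees algebra, following the strategy of Vasconcelos and its refinements by Goto--Nishida--Ozeki and Ozeki--Rossi. The guiding principle is that the Hilbert coefficients $\rme_1(I), \rme_2(I), \rme_3(I)$ are determined, up to the base data $\rme_0(I)$ and $\ell_A(A/I)$, by the Hilbert series of $S$, so the problem reduces to controlling $\ell_A(S_n)$ as a polynomial function of $n$.

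First I would exploit the hypothesis $I^4 = QI^3$ to see that $S$ is generated as a $T$-module in degrees $\le 2$, by lifts of bases of $I^2/QI$ and $I^3/QI^2$. The hypothesis $\fkm I^3 \subseteq QI^2$ then ensures that a suitable set of degree-$2$ generators is annihilated by $\fkm$. Combining this with Itoh's equality $Q \cap I^{n+1} = QI^n$ (which holds since $I$ is integrally closed in the Cohen--Macaulay ring $A$) allows me to fit $S$ into a short exact sequence of graded $T$-modules
\begin{equation*}
0 \to \calC \to S \to \calB \to 0,
\end{equation*}
where $\calB$ is generated in degree $1$ by $\ell_A(I^2/QI)$ elements, and $\calC$ is a module over $T/\fkm T \cong (A/\fkm)[X_1,\dots,X_d]$ generated in degree $2$ by at most $\ell_A(I^3/QI^2)$ elements. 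Applying the standard Hilbert-function bound for a finitely generated graded module over a polynomial ring in $d$ variables concentrated in one initial degree gives
\begin{equation*}
\ell_A(\calB_n) \le \ell_A(I^2/QI)\binom{n+d-2}{d-1}, \qquad \ell_A(\calC_n) \le \ell_A(I^3/QI^2)\binom{n+d-3}{d-1}.
\end{equation*}
Summing these yields an upper bound for $\ell_A(S_n)$, and translating it into inequalities on $\rme_1(I), \rme_2(I), \rme_3(I)$ and then eliminating $\rme_3(I)$ produces the inequality (\ref{maineq}).

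For the equality case, equality in (\ref{maineq}) would force both length bounds above to be sharp for all $n \ge 1$, making $\calB$ and $\calC$ free graded modules over $T/\fkm T$ of ranks $\ell_A(I^2/QI)$ and $\ell_A(I^3/QI^2)$ concentrated in degrees $1$ and $2$ respectively. Consequently $S$ becomes a Cohen--Macaulay $T$-module of Krull dimension $d$, which by the standard Sally-module dictionary (Goto--Nishida, Huckaba--Marley) is equivalent to $\depth \calG(I) \ge d-1$. Under this Cohen--Macaulayness the Hilbert series of $S$ computes directly, and substituting it into the identity relating $\ell_A(A/I^{n+1})$ to $\ell_A(S_n)$ gives the explicit polynomial formula stated in the theorem.

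The main obstacle I anticipate is constructing the submodule $\calC$ cleanly: one must lift a basis of $I^3/QI^2$ inside $S_2$ so that the resulting graded submodule is $T$-stable and $\fkm$-annihilated in \emph{every} degree, not only in degree $2$. The hypothesis $\fkm I^3 \subseteq QI^2$ provides the annihilation in degree $2$, but propagating it through multiplication by $T_+$ requires integral closedness of $I$ via Itoh's lemma, which simultaneously prevents the quotient $\calB$ from acquiring spurious $\fkm$-torsion that would inflate its Hilbert function and break the length count. A secondary technicality is the borderline case $d = 2$, where $\calC$ has Krull dimension $0$ and the term $\binom{n+d-3}{d-3}$ degenerates; this is exactly the convention stated in the theorem.
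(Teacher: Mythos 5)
Your overall frame (Sally module, a Vaz Pinto--type splitting of $S$ into a degree-one-generated part and a degree-two-generated, $\fkm$-annihilated part, then Hilbert-function bookkeeping) matches the paper's setting, but two steps do not work as described. First, your exact sequence runs in the wrong direction: you make the $\fkm$-annihilated, degree-two-generated piece $\calC$ a \emph{submodule} of $S$ and the degree-one-generated piece $\calB$ the quotient. The obstruction you yourself flag is fatal, not technical: the hypothesis $\fkm I^3 \subseteq QI^2$ kills degree-two classes only modulo $QI^2$, whereas a submodule of $S$ sitting inside $S_2 = I^3/Q^2I$ would require lifts $c \in I^3$ of a basis of $I^3/QI^2$ with $\fkm c \subseteq Q^2I$, which the hypotheses do not provide. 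The paper uses the opposite (standard Vaz Pinto/Ozeki--Rossi) filtration: $L = \calR(Q)S_1$ is the submodule and $C \cong \bigoplus_{n\ge 2} I^{n+1}/Q^{n-1}I^2$ is the \emph{quotient}; then $I^4 = QI^3$ and $\fkm I^3 \subseteq QI^2$ translate exactly into ``$C$ is generated in degree $2$'' and ``$\fkm C = 0$'' (Claim \ref{claim1}, quoted from Ozeki--Rossi), no lifting needed, and the exact Hilbert function of the degree-one part is already encoded in Proposition \ref{basic}(a) via Itoh's theorem $Q \cap I^2 = QI$.

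Second, and more fundamentally, even granting your sequence, summing the bounds $\ell_A(\calB_n) \le \ell_A(I^2/QI)\binom{n+d-2}{d-1}$ and $\ell_A(\calC_n) \le \ell_A(I^3/QI^2)\binom{n+d-3}{d-1}$ only bounds $\ell_A(S_n)$ from above, and an inequality between polynomial functions for $n \gg 0$ yields coefficient information only lexicographically. Writing $\ell_A(S_n) = (\rme_1 - \rme_0 + \ell_A(A/I))\binom{n+d-1}{d-1} - \rme_2\binom{n+d-2}{d-2} + \cdots$, the leading coefficient equals $\ell_A(I^2/QI) + r$ where $r$ is the rank of $C$, while your bound's leading coefficient is $\ell_A(I^2/QI) + \ell_A(I^3/QI^2)$; so whenever $C$ is not free (i.e.\ $r < \ell_A(I^3/QI^2)$) your comparison is strict already at the top coefficient and gives no constraint at all on $\rme_2$. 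But the target (\ref{maineq}) is the mixed inequality $2\rme_1 - \rme_2 \ge 2(\rme_0 - \ell_A(A/I)) + \ell_A(I^2/QI)$, which involves $\rme_2$ precisely in that regime. The paper's key step, absent from your proposal, is structural: since $C$ is torsion-free over $P = \calR(Q)/\fkm\calR(Q)$ (Proposition \ref{basic}(b)) and generated in degree $2$, there is a graded Bourbaki sequence $0 \to P(-2)^{r-1} \to C \to \fka(m) \to 0$, whence $\rme_1 = \rme_0 - \ell_A(A/I) + \ell_A(I^2/QI) + r$ and $\rme_2 = \ell_A(I^2/QI) + 2(r-1) - m$ \emph{exactly}; the inequality is then the single statement $m+2 \ge 0$, which holds because the ideal $\fka \subseteq P$ is generated in degree $m+2$. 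This also drives the equality case ($m+2=0$ forces $\fka = P$, so $C \cong P(-2)^r$ is free, hence Cohen--Macaulay, hence $\depth\calG(I)\ge d-1$ by Proposition \ref{basic}(c), and conversely); your route to the equality criterion presupposes the inequality mechanism that is missing.
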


Note that for the case where $I=\fkm$, the inclusion $\fkm I^3\subseteq QI^2$ is automatically satisfied if $I^4=QI^3$. Hence, by noting that $\ell_A(I^2/QI)=\rme_0(I) + (d-1)\ell_A(A/I) - \ell_A(I/I^2)$ (see Remark \ref{remrem}), Theorem \ref{mainmain} can be rephrased as follows.

\begin{cor}\label{corcorcor} 
Suppose that $\fkm^4=Q\fkm^3$ for some minimal reduction $Q$. Then 
 \[
3\rme_0(\fkm)-2\rme_1(\fkm)+\rme_2(\fkm)+d-3 \le v(A)
 \]
 holds, where $v(A)$ denotes the embedding dimension $\ell_A(\fkm/\fkm^2)$. The equality holds if and only if $\depth \calG(\fkm)\ge d-1$.
\end{cor}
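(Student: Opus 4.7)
The plan is to deduce this statement as a direct specialization of Theorem \ref{mainmain} to the case $I = \mathfrak{m}$. The first step is to verify the hypotheses. The maximal ideal $\mathfrak{m}$ is automatically integrally closed: any element integral over $\mathfrak{m}$ satisfies a monic equation with coefficients in successive powers of $\mathfrak{m}$, and hence cannot be a unit, so it lies in $\mathfrak{m}$. The assumption $\mathfrak{m}^4 = Q\mathfrak{m}^3$ matches $I^4 = QI^3$, and the secondary hypothesis $\mathfrak{m} I^3 \subseteq QI^2$ becomes $\mathfrak{m}^4 \subseteq Q\mathfrak{m}^2$, which is immediate from $\mathfrak{m}^4 = Q\mathfrak{m}^3 \subseteq Q\mathfrak{m}^2$. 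This is precisely the reason Corollary \ref{corcorcor} takes its simplified form.

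Next, I would substitute $I = \mathfrak{m}$ into inequality (\ref{maineq}). Since $\ell_A(A/\mathfrak{m}) = 1$, the inequality reads
\[
1 \ge \rme_0(\mathfrak{m}) - \rme_1(\mathfrak{m}) + \frac{\rme_2(\mathfrak{m}) + \ell_A(\mathfrak{m}^2/Q\mathfrak{m})}{2}.
\]
Applying the identity from Remark \ref{remrem}, which in this case specializes to $\ell_A(\mathfrak{m}^2/Q\mathfrak{m}) = \rme_0(\mathfrak{m}) + (d-1) - v(A)$ (using $\ell_A(A/\mathfrak{m})=1$ and $\ell_A(\mathfrak{m}/\mathfrak{m}^2)=v(A)$), clearing the denominator by multiplying both sides by $2$, and rearranging, I obtain
\[
v(A) \ge 3\rme_0(\mathfrak{m}) - 2\rme_1(\mathfrak{m}) + \rme_2(\mathfrak{m}) + d - 3,
\]
which is the desired inequality.

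For the equality characterization, each step of the substitution is reversible, so equality in the derived inequality is equivalent to equality in (\ref{maineq}) evaluated at $I = \mathfrak{m}$. By Theorem \ref{mainmain} this is in turn equivalent to $\depth \calG(\mathfrak{m}) \ge d-1$, completing the proof. Since the argument is purely formal once Theorem \ref{mainmain} is in hand, there is no substantive obstacle; the content of the corollary lies in noticing that for $I = \mathfrak{m}$ the hypothesis $\mathfrak{m} I^3 \subseteq QI^2$ becomes automatic and that $\ell_A(I^2/QI)$ admits the closed-form expression in terms of $\rme_0(\mathfrak{m})$, $d$, and $v(A)$ recalled above.
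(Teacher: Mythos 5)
Your proof is correct and follows exactly the paper's route: the paper also deduces the corollary by specializing Theorem \ref{mainmain} to $I=\fkm$, observing that $\fkm I^3\subseteq QI^2$ is automatic when $\fkm^4=Q\fkm^3$, and substituting the identity of Remark \ref{remrem} for $\ell_A(\fkm^2/Q\fkm)$. Your verification of the hypotheses (integral closedness of $\fkm$ and the automatic inclusion) and the reversibility argument for the equality case fill in details the paper leaves implicit, but the argument is the same.
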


In Section \ref{Main results} we prove Theorem \ref{mainmain} and Corollary \ref{corcorcor}. In Section \ref{det} we study examples arising from determinantal rings.



\section{Proof of Theorem  \ref{mainmain}}\label{Main results}

Throughout this section, let $(A, \fkm)$ be a Cohen-Macaulay local ring of dimension $d\ge 2$, and let 
 $I$ be an $\fkm$-primary ideal of $A$. Suppose that $A/\fkm$ is infinite. Choose a parameter ideal $Q\subseteq I$ such that $I^{n+1}=QI^n$ for some $n \ge 0$. 
Set
\begin{align*} 
\calR(I)=A[It] \subseteq A[t] \quad \text{and} \quad  \calG(I)=\calR(I)/I\calR(I)\cong \bigoplus_{n\ge 0}I^n/I^{n+1},
\end{align*}
where $A[t]$ is the polynomial ring over $A$.  $\calR(I)$ and $\calG(I)$ are called the {\it Rees algebra} of $I$ and the {\it associated graded ring} of $I$, respectively.
To investigate the Hilbert function in connection with the structures of $\calR(I)$ and $\calG(I)$, we further define the following finitely generated graded $\calR(Q)$-modules:
\begin{align*} 
&S=\calS_Q (I)=I \calR(I)/I \calR(Q)\cong \bigoplus_{n\ge 1} I^{n+1}/Q^{n}I; \\
&L=\calL_Q (I)= \calR(Q)S_1\cong \bigoplus_{n\ge 1} Q^{n-1}I^{2}/Q^nI;\\
&C=\calC_Q (I)=(I^2 \calR(I)/I^2 \calR(Q))(-1)\cong \bigoplus_{n\ge 2} I^{n+1}/Q^{n-1}I^2.
\end{align*}
$S$ is called the {\it Sally module} of $I$ with respect to $Q$ (\cite{V}). $L$ and $C$ are a part of the filtration of $S$ defined by Vaz Pinto (\cite{VP}). Hence we have the graded exact sequence
\[
0 \to L \to S \to C \to 0.
\]
The importance of the notion of Sally modules is to describe a correction term of the Hilbert function of $I$ (see for example \cite[Proposition 2.2(2)]{GNO}). Furthermore, the filtration of Vaz Pinto is effective as follows if $Q\cap I^2=QI$ holds.

\begin{prop}
\label{basic}
Suppose that $Q\cap I^2=QI$ holds. Then we have the following.
\begin{enumerate}[{\rm (a)}] 
\item {\rm (\cite[Proposition 2.8]{OR})} We have  
\begin{align*} 
\ell_A(A/I^{n+1})=&\rme_0(I)\binom{n+d}{d}-[\rme_0(I)-\ell_A (A/I)+\ell_A(I^2/QI)]\binom{n+d-1}{d-1}\\
&+\ell_A(I^2/QI)\binom{n+d-2}{d-2}- \ell_A(C_n) 
\end{align*}
for all $n\ge 0$.
\item {\rm (\cite[Proposition 2.2]{OR})} $\Ass_{\calR(Q)}C\subseteq \{\fkm \calR(Q)\}$. In particular, either $C=0$ or $\dim C=d$ holds.
\item {\rm (\cite[Lemma 2.11]{OR})} $\depth \calG(I)\ge d-1$ if and only if either $C=0$ holds or $C$ is a ($d$-dimensional) Cohen-Macaulay $\calR(Q)$-module. If $C$ is neither $0$ nor a Cohen-Macaulay $\calR(Q)$-module, then $\depth \calG(I)= \depth_{\calR(Q)} C-1$.
\end{enumerate} 
\end{prop}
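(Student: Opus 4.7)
The plan is to prove (a), (b), (c) in tandem by exploiting the Vaz Pinto short exact sequence $0 \to L \to S \to C \to 0$ together with a clean structural description of $L$ that becomes available precisely because $Q \cap I^2 = QI$. The key preliminary observation I would establish is that, under this hypothesis, the natural multiplication map $(Q^{n-1}/Q^n) \otimes_{A/Q} (I^2/QI) \to L_n = Q^{n-1}I^2/Q^n I$ is an isomorphism for every $n \ge 1$; since $Q$ is generated by an $A$-regular sequence (as $A$ is Cohen-Macaulay), $Q^{n-1}/Q^n$ is free over $A/Q$ of rank $\binom{n+d-2}{d-1}$, so this observation yields simultaneously the length formula $\ell_A(L_n) = \binom{n+d-2}{d-1}\ell_A(I^2/QI)$ and the fact that $L$ is a $d$-dimensional Cohen-Macaulay $\calR(Q)$-module.

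For (a), I would start from the identity
\[
\ell_A(A/I^{n+1}) = \binom{n+d}{d}\rme_0(I) - \binom{n+d-1}{d-1}\ell_A(I/Q) - \ell_A(S_n),
\]
valid for all $n \ge 0$: it follows from the filtration $I^{n+1} \supseteq Q^n I \supseteq Q^{n+1}$ together with $\ell_A(A/Q^{n+1}) = \rme_0(I)\binom{n+d}{d}$ (Cohen-Macaulayness of $A$) and $\ell_A(Q^n I/Q^{n+1}) = \binom{n+d-1}{d-1}\ell_A(I/Q)$ (using that $I\calG(Q) \cong (I/Q) \otimes_{A/Q} \calG(Q)$ as graded $\calG(Q)$-modules, since $\calG(Q)$ is a polynomial ring over $A/Q$). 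Substituting $\ell_A(S_n) = \ell_A(L_n) + \ell_A(C_n)$, the value of $\ell_A(L_n)$ from the preliminary observation, the $n = 0$ identity $\ell_A(I/Q) = \rme_0(I) - \ell_A(A/I)$, and Pascal's rule $\binom{n+d-2}{d-1} = \binom{n+d-1}{d-1} - \binom{n+d-2}{d-2}$ regroups the terms into the asserted form.

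For (b), every $C_n$ has finite length as an $A$-module, so every $\P \in \Ass_{\calR(Q)}(C)$ contains $\fkm\calR(Q)$; the task is to exclude larger primes. I would choose $a \in Q$ superficial for $I$ (possible since $A/\fkm$ is infinite), so that $a$ is $A$-regular, and argue that multiplication by $at \in \calR(Q)_1$ is injective on $C$. The concrete step is to prove $aI^{n+1} \cap Q^n I^2 \subseteq a Q^{n-1}I^2$ for every $n \ge 2$; the case $n = 2$ uses $Q \cap I^2 = QI$ essentially, and higher $n$ follows by induction via the regularity of $a$ on the relevant quotients. Once $at$ is $C$-regular, combined with the fact that $\fkm\calR(Q)$ is prime of coheight $d$ and $\Supp C \subseteq V(\fkm\calR(Q))$, one deduces after localizing at $\fkm\calR(Q)$ that $C_{\fkm\calR(Q)}$ is an Artinian module whose only associated prime is the maximal ideal, while any properly larger $\P \in \Ass C$ would have the element $at$ in its localization. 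This pins down $\Ass C = \{\fkm\calR(Q)\}$ and forces $\dim C = d$ whenever $C \ne 0$.

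For (c), I would first establish the equivalence $\depth \calG(I) \ge d - 1 \Leftrightarrow \depth_{\calR(Q)} S \ge d$ by a depth chase along the short exact sequences $0 \to I\calR(I) \to \calR(I) \to A \to 0$ and $0 \to I\calR(Q) \to I\calR(I) \to S \to 0$, using that $\calR(Q)$ is $(d+1)$-dimensional Cohen-Macaulay and $\calG(Q)$ is a polynomial ring over $A/Q$. Combined with (b) and the Cohen-Macaulayness of $L$, the depth long exact sequence attached to $0 \to L \to S \to C \to 0$ yields: $\depth S \ge d$ iff either $C = 0$ (so $S = L$ is Cohen-Macaulay) or $C$ is a $d$-dimensional Cohen-Macaulay $\calR(Q)$-module; moreover, when $C$ is neither zero nor Cohen-Macaulay, the same long exact sequence gives $\depth_{\calR(Q)} S = \depth_{\calR(Q)} C$, hence $\depth \calG(I) = \depth_{\calR(Q)} C - 1$. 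The main technical obstacle I anticipate is the initial equivalence between the depths of $\calG(I)$ and $S$: this requires careful tracking of graded shifts along the $Q$-adic filtration on $\calR(I)$ so that the depth information of the Sally module translates precisely into depth of the associated graded ring at the critical threshold $d-1$.
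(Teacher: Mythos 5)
First, note that the paper does not prove Proposition \ref{basic} at all: all three parts are imported verbatim from Ozeki--Rossi \cite{OR} (Propositions 2.2, 2.8 and Lemma 2.11 there), so your proposal must be judged as a from-scratch reconstruction. Part (a) of your proposal is correct and is essentially the standard derivation behind \cite[Proposition 2.8]{OR}: your key observation that $Q\cap I^2=QI$ makes $(Q^{n-1}/Q^n)\otimes_{A/Q}(I^2/QI)\to L_n$ bijective is right (write an element of $Q^nI=Q^{n-1}\cdot QI$ as $\sum_{\mu}\mu y_\mu$ over degree-$(n-1)$ monomials in the regular sequence with $y_\mu\in QI$, use freeness of $Q^{n-1}/Q^n$ over $A/Q$ to conclude the coefficients lie in $Q\cap I^2=QI$), giving $L\cong \calG(Q)\otimes_{A/Q}(I^2/QI)(-1)$, hence $\ell_A(L_n)=\binom{n+d-2}{d-1}\ell_A(I^2/QI)$ and the Cohen--Macaulayness of $L$; combined with the identity $\ell_A(A/I^{n+1})=\rme_0(I)\binom{n+d}{d}-\ell_A(I/Q)\binom{n+d-1}{d-1}-\ell_A(S_n)$ (which you justify correctly) and Pascal's rule, this yields (a).

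Parts (b) and (c), however, contain genuine gaps. In (b), even granting that $at$ is $C$-regular, the conclusion $\Ass_{\calR(Q)}C\subseteq\{\fkm\calR(Q)\}$ does not follow: regularity of $at$ only shows that no associated prime contains $at$, and graded primes strictly containing $\fkm\calR(Q)$ need not contain any linear form. Concretely, working modulo $\fkm\calR(Q)$ over $P\cong (A/\fkm)[X_1,\dots,X_d]$, a module such as $P/(X_1X_2-X_3^2)$ (for $d\ge 3$) has every linear form outside its defining prime acting regularly --- so even regularity of \emph{all} general elements of $\calR(Q)_1$ cannot exclude associated primes other than $\fkm\calR(Q)$; your localization sentence does not repair this. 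Moreover, your intersection claim $aI^{n+1}\cap Q^nI^2\subseteq aQ^{n-1}I^2$ for all $n\ge 2$ is unsubstantiated: beyond $n=2$ the induction needs control of intersections of the type $Q^n\cap I^{n+1}$ (Itoh/Valabrega--Valla conditions), which are not available from $Q\cap I^2=QI$ alone; proving $at$ regular on $C$ at this generality is essentially as hard as (b) itself. In (c), besides the reliance on (b), the transfer across $0\to L\to S\to C\to 0$ is one-directional: writing $\fkM$ for the graded maximal ideal of $\calR(Q)$ and using $\mathrm{H}^i_{\fkM}(L)=0$ for $i\ne d$, the long exact sequence gives $\mathrm{H}^i_{\fkM}(S)\cong \mathrm{H}^i_{\fkM}(C)$ for $i<d-1$ and $0\to \mathrm{H}^{d-1}_{\fkM}(S)\to \mathrm{H}^{d-1}_{\fkM}(C)\to \mathrm{H}^{d}_{\fkM}(L)\to \mathrm{H}^{d}_{\fkM}(S)$, so ``$C=0$ or $C$ Cohen--Macaulay'' does imply $\depth_{\calR(Q)}S\ge d$; but the converse direction, which the proposition needs, requires $\mathrm{H}^{d-1}_{\fkM}(C)=0$, i.e.\ injectivity of $\mathrm{H}^{d}_{\fkM}(L)\to \mathrm{H}^{d}_{\fkM}(S)$, and neither the depth lemma nor unmixedness from (b) supplies this ($\Ass C=\{\fkm\calR(Q)\}$ does not force $C$ to be Cohen--Macaulay). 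You also misstate one sequence: $\calR(I)/I\calR(I)\cong\calG(I)$, not $A$; and the equivalence $\depth\calG(I)\ge d-1\Leftrightarrow \depth_{\calR(Q)}S\ge d$, which you flag as the main obstacle, is itself a nontrivial theorem (Vaz Pinto, Goto--Nishida--Ozeki) that your sketch does not prove. These are precisely the points for which the paper defers to \cite{OR}.
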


\begin{rem} \label{rem1} {\rm (\cite[Theorem 1]{I2})}
If $I$ is an integrally closed ideal, then $Q\cap I^2=QI$ holds.
\end{rem}

Now let us prove Theorem \ref{mainmain}.

\begin{proof}[Proof of Theorem \ref{mainmain}]
If $C=0$, then $\rme_1(I)=\rme_0(I)-\ell_A(A/I) + \ell_A(I^2/QI)$ and $\rme_2(I)=\ell_A(I^2/QI)$ by Proposition \ref{basic}(a). It follows that (\ref{maineq}) becomes an equality if $C=0$. When this is the case, $\depth \calG(I)\ge d-1$ by Proposition \ref{basic}(c). Thus we may assume $C\ne 0$. Then our assumption can be rephrased as follows.

\begin{claim}\label{claim1}{\rm (\cite[Lemma 2.1 (4) and (5)]{OR})}
$I^4=QI^3$ and $\fkm I^3\subseteq QI^2$ if and only if $C=\calR(Q)C_2$ and $\fkm C=0$.
\end{claim}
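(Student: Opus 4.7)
The plan is to treat Claim~\ref{claim1} as a pair of ``dictionary'' translations between ideal-theoretic conditions on $(I,Q)$ and graded-module-theoretic conditions on $C$, using the explicit description of $C$. Recalling that $C_n = I^{n+1}/Q^{n-1}I^2$ for $n \ge 2$ and that the degree-$m$ piece of $\calR(Q)$ acts by multiplication by $Q^m$, the submodule $\calR(Q)C_2 \subseteq C$ has degree-$n$ component equal to the image of $Q^{n-2}I^3$ inside $I^{n+1}/Q^{n-1}I^2$. I would package the proof as two independent equivalences that combine to give the claim.

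First, I would establish that $C = \calR(Q)C_2$ is equivalent to $I^4 = QI^3$. Unwinding the definition above, $C = \calR(Q)C_2$ is equivalent to $I^{n+1} = Q^{n-2}I^3 + Q^{n-1}I^2$ for every $n \ge 3$. Since $Q \subseteq I$ gives $Q^{n-1}I^2 = Q^{n-2}\cdot QI^2 \subseteq Q^{n-2}I^3$, this simplifies to $I^{n+1} = Q^{n-2}I^3$ for all $n \ge 3$. The $n=3$ case is exactly $I^4 = QI^3$, and a routine induction (multiplying the equality $I^{n+1} = Q^{n-2}I^3$ by $I$ and invoking $I^4 = QI^3$ again) shows that this single condition implies all the higher equalities. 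Hence the first equivalence.

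Next, assuming $I^4 = QI^3$ (equivalently $C = \calR(Q)C_2$), I would show $\fkm C = 0$ if and only if $\fkm I^3 \subseteq QI^2$. The forward direction is immediate by restricting to $C_2 = I^3/QI^2$. For the reverse, once $C$ is generated in degree $2$ as an $\calR(Q)$-module, one simply propagates the vanishing using commutativity of multiplication: $\fkm C_n = \fkm \cdot Q^{n-2} C_2 = Q^{n-2}(\fkm C_2) = 0$ for every $n \ge 2$. Combining the two equivalences yields the statement of the claim.

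The proof is essentially bookkeeping; the only subtlety is the degree shift $(-1)$ in the definition $C = (I^2\calR(I)/I^2\calR(Q))(-1)$, which has to be tracked carefully so that $C_2$ really corresponds to $I^3/QI^2$ and the $\calR(Q)$-action lines up with multiplication by $Q$. Beyond this indexing care, I do not foresee any substantive obstacle.
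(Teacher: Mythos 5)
Your proposal is correct, and it is the expected argument: the paper does not prove Claim \ref{claim1} internally but quotes it from \cite[Lemma 2.1 (4) and (5)]{OR}, and your degree-by-degree unwinding is exactly the routine verification that citation stands for. In particular, your two key reductions check out: $C=\calR(Q)C_2$ amounts to $I^{n+1}=Q^{n-2}I^3+Q^{n-1}I^2=Q^{n-2}I^3$ for all $n\ge 3$, which collapses to the single equality $I^4=QI^3$ by the induction $I^{n+2}=Q^{n-2}I^4=Q^{n-1}I^3$, and once $C$ is generated in degree $2$ the vanishing $\fkm C_2=0$ (i.e.\ $\fkm I^3\subseteq QI^2$) propagates via $\fkm C_n=\calR(Q)_{n-2}(\fkm C_2)=0$, with the degree shift in $C=(I^2\calR(I)/I^2\calR(Q))(-1)$ handled correctly so that $C_2=I^3/QI^2$.
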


Hence $C$ is a finitely generated $\calR(Q)/\fkm \calR(Q)$-module generated in degree $2$. It follows that $\Ass_{\calR(Q)/\fkm \calR(Q)} C=\{0\}$ by Proposition \ref{basic}(b). Set $P=\calR(Q)/\fkm \calR(Q)$. Note that $P$ is isomorphic to the polynomial ring $(A/\fkm)[X_1, \dots, X_d]$, because $A$ is a Cohen-Macaulay local ring and $Q$ is a parameter ideal. Therefore, by the graded version of Bourbaki's theorem (see for examples \cite[Corollary 2.4]{K} or \cite[Theorem 2.1]{HKS}), there exist a graded ideal $\fka$ of $P$ and an integer $m\in \mathbb{Z}$ such that 
\begin{align}\label{Bourbaki}
0 \to P(-2)^{r-1} \to C \to \fka(m) \to 0
\end{align}
is a graded exact sequence, where $r$ is the rank of $C$ as a $P$-module. In other words, we have a graded exact sequence 
\[
0 \to P(-2)^{r-1} \to C \to P(m) \to (P/\fka)(m) \to 0.
\]
We can choose $\fka$ such that $\height_P \fka\ge 2$ because $P$ is a factorial domain. Then $\ell_A((P/\fka)_n)$ is a polynomial function of degree less than $d-2$ for $n\gg0$. Hence we obtain that 
\begin{align*} 
\ell_A(C_n)=&(r-1)\ell_A(P_{n-2}) + \ell_A(P_{n+m}) - \ell_A((P/\fka)_{n+m})\\
=& (r-1)\binom{n-2+d-1}{d-1} + \binom{n+m+d-1}{d-1}  - \ell_A((P/\fka)_{n+m})\\
=& r\binom{n+d-1}{d-1} +[-2(r-1)+m]\binom{n+d-2}{d-2}- (\text{a polynomial of degree $<d-2$}) 
\end{align*}
for all $n\gg0$. By Proposition \ref{basic}(a), it follows that 
\begin{small} 
\begin{align*} 
\ell_A(A/I^{n+1})=& \rme_0(I)\binom{n+d}{d}-[\rme_0(I)-\ell_A (A/I)+\ell_A(I^2/QI)+r]\binom{n+d-1}{d-1}\\
&+[\ell_A(I^2/QI)+2(r-1)-m]\binom{n+d-2}{d-2}\\
&+ (\text{a polynomial of degree $<d-2$})\\
\end{align*}
\end{small}
for all $n\gg 0$. 
Therefore, we obtain 
\begin{align*} 
\rme_1(I)&= \rme_0(I)-\ell_A (A/I)+\ell_A(I^2/QI)+r \quad \text{and} \\
\rme_2(I)&=\ell_A(I^2/QI)+2(r-1)-m.
\end{align*}
It follows that 
\begin{align} \label{proofeq}
\rme_2(I)=\ell_A(I^2/QI)+2[\rme_1(I) - \rme_0(I) + \ell_A (A/I) - \ell_A(I^2/QI)]-(m+2).
\end{align}

On the other hand, Claim \ref{claim1} and (\ref{Bourbaki}) imply that $\fka$ is generated in degree $m+2$. Hence we have $m+2\ge 0$ because $\fka$ is an ideal of the polynomial ring $P$. Hence we obtain the inequality (\ref{maineq}) by  (\ref{proofeq}). 

Assume that the inequality (\ref{maineq}) becomes an equality. Then $m+2=0$, i.e. $\fka$ is generated in degree $0$, whence $\fka=P$. It follows that (\ref{Bourbaki}) splits and $C\cong P(-2)^r$. Thus we have $\depth \calG(I)\ge d-1$ by Proposition \ref{basic}(c). Conversely, assume that $\depth \calG(I)\ge d-1$. Then $C$ is a Cohen-Macaulay $\calR(Q)$-module, thus $C$ is a Cohen-Macaulay module as a $P$-module. Because $P$ is a polynomial ring over the field $A/\fkm$, $C$ is a graded free $P$-module of rank $r$. By Claim \ref{claim1}, it follows that $C\cong P(-2)^r$. Therefore, the inequality (\ref{maineq}) becomes an equality by substituting 
\[
\ell_A(C_n) = r \binom{n-2+d-1}{d-1}=r\binom{n+d-1}{d-1}-2r\binom{n+d-2}{d-2}+r\binom{n+d-3}{d-3}
\]
for all $n\ge 1$ for the equation in Proposition \ref{basic}(a).

When this is the case, because $C\cong P(-2)^r$, we obtain 
\begin{align*} 
r=&\ell_A(C_2/\fkm C_2)=\ell_A(I^3/[QI^2~+~\fkm I^3])=\ell_A(I^3/QI^2) \quad \text{and}\\
\ell_A(C_n)=&r \binom{n-2+d-1}{d-1} \quad \text{for all $n\ge 1$.}
\end{align*} 
It allows us to calculate the Hilbert function by using Proposition \ref{basic}(a).
\end{proof}

Corollary \ref{corcorcor} follows from Theorem \ref{mainmain} and the following remark.

\begin{rem} \label{remrem}
Let $(A, \fkm)$ be a Cohen-Macaulay local ring, and let $I$ be an $\fkm$-primary ideal. Suppose that $Q\subseteq I$ is a parameter ideal such that $Q$ is a reduction of $I$. Then $\ell_A(I^2/QI)=\rme_0(I) + (d-1)\ell_A(A/I) - \ell_A(I/I^2)$ holds.
\end{rem}

\begin{proof} 
Because we have the following inclusions
\begin{small} 
\[
\xymatrix{
           &    A      & \\
& I \ar@{-}[u]   &  \\
I^2 \ar@{-}[ur] &  & Q \ar@{-}[ul]\\
& QI, \ar@{-}[ur] \ar@{-}[ul] 
}
\]
\end{small}
we obtain 
\begin{align*} 
\begin{split} \label{note}
\ell_A(I^2/QI)=&\ell_A(A/Q) + \ell_A(Q/QI) - \ell_A (A/I) - \ell_A(I/I^2)\\
=&\rme_0(I) + (d-1)\ell_A(A/I) - \ell_A(I/I^2),
\end{split}
\end{align*}
where the second equality follows from the isomorphism 
\[
Q/QI\cong Q/Q^2\otimes_A A/I\cong (A/I)^d. 
\]
\end{proof}

We can also characterize when the difference of the inequality (\ref{maineq}) is $1/2$.

\begin{cor}\label{2.42.4}
Suppose that $I$ is an integrally closed $\fkm$-primary ideal such that $I^4=QI^3$ and $\fkm I^3 \subseteq QI^2$. Then the following are equivalent:
\begin{enumerate}[{\rm (a)}] 
\item $\ell_A(A/I) = \rme_0(I) - \rme_1(I) + \dfrac{\rme_2(I) + \ell_A(I^2/QI)+1}{2}$;
\item there exists a graded exact sequence 
\[
0 \to P(-2)^{r-1} \to C \to (X_1, X_2, \dots, X_c) (-1)\to 0,
\]
where $r$ denotes the rank of $C$ as a $P$-module, $X_1, X_2, \dots, X_d$ are variants of $P \cong (A/\fkm)[X_1, X_2, \dots, X_d]$, and $c=\ell_A(I^3/QI^2)-r+1\ge 2$.
\end{enumerate} 
When this is the case, $\depth \calG(I)=d-c$ and 
\begin{align*} 
\ell_A(A/I^{n+1})=&\rme_0(I)\binom{n+d}{d}-[\rme_0(I)-\ell_A (A/I)+\ell_A(I^2/QI)+r]\binom{n+d-1}{d-1}\\
&+[\ell_A(I^2/QI) + 2r-1]\binom{n+d-2}{d-2}- (r-1)\binom{n+d-3}{d-3} \\
& + \binom{n+d-c-1}{d-c-1} - \binom{n+d-c-2}{d-c-2}
\end{align*}
for all $n\ge 1$. (we regard as $\binom{n+i}{i}=0$ if $i<0$.)
\end{cor}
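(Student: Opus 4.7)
The plan is to run the argument of Theorem~\ref{mainmain} one step finer, now pinpointing exactly when the inequality~(\ref{maineq}) overshoots equality by $1/2$. The case $C=0$ can be excluded at once, since Theorem~\ref{mainmain} would then force equality in~(\ref{maineq}) rather than a surplus of $1/2$. Consequently the Bourbaki sequence
\[
0 \to P(-2)^{r-1} \to C \to \fka(m) \to 0
\]
from the proof of Theorem~\ref{mainmain}, with $\fka$ a graded ideal of $P$ of height $\ge 2$, remains available, and the same Hilbert-coefficient computation yields
\[
m+2 \;=\; 2[\ell_A(A/I)-\rme_0(I)+\rme_1(I)]-\rme_2(I)-\ell_A(I^2/QI).
\]
Thus condition~(a) is equivalent to $m=-1$.

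For (a)$\Rightarrow$(b): once $m=-1$, Claim~\ref{claim1} together with the Bourbaki sequence forces $\fka$ to be generated in degree $m+2=1$, i.e.\ by linear forms of $P$. Since $P\cong(A/\fkm)[X_1,\dots,X_d]$ is a polynomial ring over a field, a linear change of variables lets me assume $\fka=(X_1,\dots,X_c)$, where $c=\dim_{A/\fkm}\fka_1$. The hypothesis $\height_P\fka\ge 2$ then forces $c\ge 2$, and comparing the degree-$2$ components of the Bourbaki sequence with $C_2=I^3/QI^2$ gives $\ell_A(I^3/QI^2)=(r-1)+c$, i.e.\ $c=\ell_A(I^3/QI^2)-r+1$. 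The implication (b)$\Rightarrow$(a) is the converse: any sequence of the stated form has $\fka$ generated in degree $1$, hence $m=-1$, and~(a) follows by running the coefficient calculation in reverse.

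For the depth claim, I use that $X_1,\dots,X_c$ is a regular sequence in $P$ to deduce $\depth_P(X_1,\dots,X_c)=d-c+1$ (via the short exact sequence $0\to(X_1,\dots,X_c)\to P\to P/(X_1,\dots,X_c)\to 0$ and the depth lemma). Since $d-c+1<d=\depth_P P(-2)^{r-1}$ (as $c\ge 2$), the long exact sequence of local cohomology applied to the sequence in~(b) yields $\depth_P C=d-c+1<d$; hence $C$ is not Cohen-Macaulay as a $P$-module, and Proposition~\ref{basic}(c) produces $\depth\calG(I)=\depth_{\calR(Q)}C-1=d-c$.

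For the Hilbert function formula, I would read off from the sequence in~(b) that
\[
\ell_A(C_n)=(r-1)\binom{n+d-3}{d-1}+\binom{n+d-2}{d-1}-\binom{n+d-c-2}{d-c-1},
\]
using $\ell_A(P_k)=\binom{k+d-1}{d-1}$ and $\ell_A((P/\fka)_k)=\binom{k+d-c-1}{d-c-1}$, then substitute into Proposition~\ref{basic}(a). The main obstacle I anticipate is this final bookkeeping: a careful iterated use of Pascal's identity to rewrite the three binomials above in terms of $\binom{n+d-j}{d-j}$, while confirming that the convention $\binom{n+i}{i}=0$ for $i<0$ correctly handles the boundary case $c=d$.
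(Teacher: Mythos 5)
Your overall route coincides with the paper's: translate condition (a) into $m+2=1$ via the identity (\ref{proofeq}) from the proof of Theorem \ref{mainmain}, conclude that $\fka$ is generated by linear forms so that $\fka\cong(X_1,\dots,X_c)$ with $c=\ell_A(I^3/QI^2)-r+1\ge 2$, and for (b)$\Rightarrow$(a) compute $\ell_A(C_n)$ directly from the exact sequence and substitute into Proposition \ref{basic}(a); your binomial bookkeeping for the Hilbert function is correct as written. The one genuine gap is in your proof of the depth claim, and it occurs exactly in the boundary case $c=2$. You assert that the long exact sequence of local cohomology applied to $0\to P(-2)^{r-1}\to C\to(X_1,\dots,X_c)(-1)\to 0$ \emph{yields} $\depth_P C=d-c+1<d$, and you then deduce non-Cohen--Macaulayness of $C$ from that. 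Writing $\fkM$ for the graded maximal ideal of $P$, the vanishing $H^i_{\fkM}(P(-2)^{r-1})=0$ for $i<d$ gives isomorphisms $H^i_{\fkM}(C)\cong H^i_{\fkM}(\fka(-1))$ only in the range $i\le d-2$, while in degree $d-1$ it gives only
\[
0\to H^{d-1}_{\fkM}(C)\to H^{d-1}_{\fkM}(\fka(-1))\xrightarrow{\ \delta\ } H^{d}_{\fkM}(P(-2)^{r-1}).
\]
When $c\ge 3$ this suffices, since $\depth_P\fka(-1)=d-c+1\le d-2$ and the low-degree isomorphisms pin down $\depth_P C=d-c+1$. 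But when $c=2$ one has $\depth_P\fka(-1)=d-1$, and the displayed sequence leaves open the possibility that $\delta$ is injective, i.e.\ $H^{d-1}_{\fkM}(C)=0$ and $C$ is Cohen--Macaulay; the long exact sequence alone cannot exclude this, so your subsequent step ``hence $C$ is not Cohen--Macaulay'' is unsupported precisely there.

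The repair is to run the implication in the opposite direction, which is what the paper does: condition (a) says the inequality (\ref{maineq}) is strict, so Theorem \ref{mainmain} gives $\depth\calG(I)<d-1$, and Proposition \ref{basic}(c) then tells you that $C$ is \emph{not} Cohen--Macaulay, i.e.\ $\depth_{\calR(Q)}C\le d-1$. With that upper bound as input, the behavior of depth along your short exact sequence (or the local cohomology sequence above) forces $\depth_{\calR(Q)}C=\depth_P\fka=d-c+1$ for every $c\ge 2$, and Proposition \ref{basic}(c) yields $\depth\calG(I)=d-c$. Everything else in your proposal is sound and matches the paper's argument.
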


\begin{proof}
 (a) $\Rightarrow$ (b): If the assertion of (a) holds, then $m+2=1$ by (\ref{proofeq}). It follows that $\fka$ is generated in degree $1$. Hence we obtain $\fka\cong (X_1, X_2, \dots, X_c)$ for some $c>0$. Then $c=\ell_A(I^3/QI^2)-r+1$ holds by (\ref{Bourbaki}), and $c\ge 2$ holds by Theorem \ref{mainmain}.

 (b) $\Rightarrow$ (a): Because 
 \[
 \ell_A(C_n)=(r-1)\ell_A(P_{n-2}) + \ell_A(P_{n-1}) - \ell_A([P/(X_1, \dots, X_c)]_{n-1})
 \] 
 for all $n \ge 1$, we can calculate $\ell_A(A/I^{n+1})$ in the same way as in the proof of Theorem \ref{mainmain}.

 When this is the case, $\depth_{\calR(Q)} C=\depth (X_1, \dots, X_c)=d-c+1<d$ because $C$ is not a Cohen-Macaulay $\calR(Q)$-module by Theorem \ref{mainmain}. Hence $\depth \calG(I)=d-c$ by Proposition \ref{basic}(c). The Hilbert function can be calculated by Proposition \ref{basic}(a).
\end{proof}

In the rest of this paper, we note some examples of Theorem \ref{mainmain}.

\begin{ex}{\rm (\cite[Example 3.2]{CPR}, \cite[Theorem 3.12]{HH}, \cite[Example 3.8]{MORT})}
Let $A=K[[X, Y, Z]]$ be the formal power series ring over a field $K$. Let $\fkm$ be the maximal ideal of $A$ and $N=(X^4, X(Y^3+Z^3), Y(Y^3+Z^3), Z(Y^3+Z^3))$. Set $I=N+\fkm^5$. Suppose that $K$ is a field of characteristic $\ne 3$. Then the following are true:
\begin{enumerate}[{\rm (a)}] 
\item $I$ is a normal $\fkm$-primary ideal, i.e. $I^n$ is an integrally closed ideal for all $n> 0$;
\item $I^4=QI^3$ and $\ell_A(I^3/QI^2)=1$ for any minimal reduction $Q$ of $I$;
\item The Hilbert series of $I$ is 
\[
\frac{31+43t+t^2+t^3}{(1-t)^3},
\]
whence $\ell_A(A/I)=31$, $\ell_A(I/I^2)=136$, $\rme_0(I)=76$, $\rme_1(I)=48$, $\rme_2(I)=4$, and $\rme_3(I)=1$. Thus the inequality (\ref{maineq}) becomes an equality;
\item $\depth \calG (I)=\dim A-1=2$. 
\end{enumerate} 

\end{ex}

\begin{ex}{\rm (\cite[Theorem 5.2]{OR})}
Let $m>0$, $d\ge 2$, and $D=K[[\{X_j\}_{1\le j \le m},  \{Y_i\}_{1\le i\le d}, \{Z_i\}_{1\le i\le d}]]$ be the formal power series  ring over a field $K$. Let
{\small
\[
\fka=(X_1, \dots, X_{m}){\cdot}(X_1, \dots, X_{m}, Y_1, \dots, Y_d) + (Y_iY_j \mid 1\le i, j\le d, i\ne j) + (Y_i^3-Z_iX_{m} \mid 1\le i \le d).
\]
}
Set $A=D/\fka$.  We denote by $x_*, y_*, z_*$ the image of $X_*, Y_*, Z_*$ into $A$. Let $\fkm$ be the maximal ideal of $A$ and 
$Q=(z_1, \dots, z_d)$. Then the following assertions are true:
\begin{enumerate}[{\rm (a)}] 
\item $A$ is a Cohen-Macaulay local ring of dimension $d$;
\item $\fkm^4=Q\fkm^3$;
\item $\rme_0(\fkm)=m+2d+1, \rme_1(\fkm)=m+3d+1, \rme(\fkm)=d+1, \text{ and } \rme_i(\fkm)=0 \quad \text{for all $3\le i \le d$}$. Hence 
$3\rme_0(\fkm)-2\rme_1(\fkm)+\rme_2(\fkm)+d-3 = v(A)-1$, which is the same as the equality in Corollary \ref{2.42.4} (a), holds;
\item $\depth \calG(\fkm) = 0$.
\end{enumerate}  
\end{ex}


\section{The associated graded rings of  determinantal rings}\label{det}

In this section, we explore examples of Theorem \ref{mainmain} arising from determinantal rings. Let $R$ be a commutative ring and $\{X_{ij}\}_{1\le i\le s, 1\le j\le t}$ be elements of $R$. Assume $2\le s\le t$, and $I_s(X_{ij})$ denotes the ideal of $R$ generated by $s\times s$-minors of the $s\times t$ matrix $(X_{ij})$. Set $A=R/I_s(X_{ij})$. Let $L$ be the ideal of $R$ generated by $\{X_{ij}\}_{1\le i\le s, 1\le j\le t}$ and $Q$ be the ideal of $R$ generated by
\begin{align*} 
&\text{$X_{ij}$ \quad ($j-i<0$ or $j-i>t-s$) \quad and}\\
&\text{$X_{ij}-X_{i-1, j-1}$ \quad ($2\le i \le s$ and $0\le j-i \le t-s$).}
\end{align*}
Set $I=LA$ and $\fkq=QA$. We denote by $x_{ij}$ the image of $X_{ij}\in R$ into $A$.
With these assumptions and notation we study the Cohen-Macaulayness of $\calG(I)$, see Theorem \ref{thm2}. Let us prepare a lemma.

\begin{lem}\label{32} 
For all $j_1, j_2, \dots, j_s\in \{1,2, \dots, s\}$ and $k_1, k_2, \dots, k_s\in \{1,2, \dots, t\}$, one of the following holds.
\begin{enumerate}[{\rm (a)}] 
\item $x_{j_1, k_1}x_{j_2, k_2} \cdots x_{j_s, k_s}\in \fkq I^{s-1}$. 
\item $x_{j_1, k_1}x_{j_2, k_2} \cdots x_{j_s, k_s} \equiv x_{1, i_1}x_{2, i_2} \cdots x_{s, i_s} \ \mod \ \fkq I^{s-1}$ for some $1\le i_1<i_2<\cdots <i_s\le t$. 
\end{enumerate} 
\end{lem}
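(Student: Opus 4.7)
The plan is to exploit two types of generators of $Q$: the corner variables $X_{ij}$ themselves (which give $x_{ij}\in\fkq$ whenever $j-i<0$ or $j-i>t-s$), and the diagonal differences $X_{ij}-X_{i-1,j-1}$ (which give $x_{ij}\equiv x_{i-1,j-1}\pmod{\fkq}$ on the band $0\le j-i\le t-s$). The elementary but essential observation I would begin with is that any factorwise substitution modulo $\fkq$ inside a degree-$s$ monomial in $I$ produces an error of the form $\fkq\cdot I^{s-1}\subseteq \fkq I^{s-1}$, hence is legitimate for the congruence we are after.

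With this in place, the argument splits into two cases. First, if some index pair $(j_l,k_l)$ lies in a corner, then $x_{j_l,k_l}\in\fkq$, and the whole monomial falls into $\fkq I^{s-1}$, giving case (a). Otherwise every $p_l:=k_l-j_l+1$ lies in $\{1,\ldots,t-s+1\}$; iterating the diagonal relation $x_{ij}\equiv x_{i-1,j-1}\pmod{\fkq}$ along the band (which keeps all intermediate index pairs inside $0\le j-i\le t-s$), one obtains $x_{j_l,k_l}\equiv x_{1,p_l}\pmod{\fkq}$ for each $l$, so
\[
x_{j_1,k_1}\cdots x_{j_s,k_s}\equiv x_{1,p_1}x_{1,p_2}\cdots x_{1,p_s}\pmod{\fkq I^{s-1}}.
\]

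The final step uses commutativity of $A$: reorder $(p_1,\ldots,p_s)$ into a weakly increasing sequence $(m_1,\ldots,m_s)$ and set $i_j:=m_j+j-1$. Since the $m_j$ are nondecreasing and all lie in $\{1,\ldots,t-s+1\}$, the $i_j$ form a strictly increasing sequence in $\{1,\ldots,t\}$ with each pair $(j,i_j)$ inside the band. Applying the reverse diagonal slides $x_{1,m_j}\equiv x_{j,i_j}\pmod{\fkq}$ and absorbing errors into $\fkq I^{s-1}$ as before yields
\[
x_{1,p_1}\cdots x_{1,p_s}\equiv x_{1,i_1}x_{2,i_2}\cdots x_{s,i_s}\pmod{\fkq I^{s-1}},
\]
which is case (b). The main obstacle is purely the bookkeeping: one must check that every intermediate index pair $(l,m_j+l-1)$ traversed during the diagonal slides stays inside the band $0\le j-i\le t-s$, which is immediate from $m_j\in\{1,\ldots,t-s+1\}$, and that the combinatorial translation between weakly increasing first-row indices $(m_j)$ and the strictly increasing staircase indices $(i_j)$ is bijective with the stated range.
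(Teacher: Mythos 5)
Your proof is correct and follows essentially the same route as the paper's: eliminate corner variables to get case (a), slide every factor down its diagonal to row $1$ modulo $\fkq I^{s-1}$, sort the first-row column indices into weakly increasing order, and slide back up the staircase to obtain strictly increasing column indices $i_j=m_j+j-1$, exactly as in the paper (where they appear as $k_j+j-1$). The only difference is that you spell out the bookkeeping (the band-preservation of the slides and the factorwise-substitution error estimate) slightly more explicitly than the paper does.
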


\begin{proof}
By definition of $\fkq$, we may assume that $0\le k_u-j_u\le t-s$ for all $1\le u \le s$. If $j_u\ge 2$, then 
\[
x_{j_1, k_1} \cdots x_{j_u, k_u}\cdots x_{j_s, k_s} \equiv x_{j_1, k_1} \cdots x_{j_{u}-1, k_{u}-1}\cdots x_{j_s, k_s} \ \mod \ \fkq I^{s-1}
\]
 because $x_{j_u, k_u}-x_{j_{u}-1, k_{u}-1}\in \fkq$.  Hence we may assume that $j_1=j_2=\cdots=j_s=1$. By reordering $x_{1, k_1}, x_{1, k_2}, \dots, x_{1, k_s}$ if necessary, we may also assume that $1 \le k_1 \le k_2 \le \cdots \le k_s \le t-s+1$. Then we obtain 
 \[
 x_{1, k_1}x_{1, k_2} \cdots x_{1, k_s} \equiv  x_{1, k_1}x_{2, k_2+1} \cdots x_{s, k_s+s-1} \ \mod \ \fkq I^{s-1}
 \]
in a  similar way to the discussion above. This shows the assertion because $1\le k_1<k_2+1<\cdots <k_s+s-1\le t$. 
\end{proof}

\begin{thm}\label{thm2}
The following equalitiies hold true.
\begin{enumerate}[{\rm (a)}]  
\item $I^s=\fkq I^{s-1}$.
\item {\rm (cf. \cite[(9.14) Theorem]{BV})} Suppose that $R$ is a Noetherian local ring. If $\{X_{ij}\}_{1\le i\le s, 1\le j \le t}$ is a regular sequence of $R$, then $\fkq \cap I^{n+1}=\fkq I^n$ holds for all $n\ge 0$. Hence, $\calG(I)$ is a Cohen-Macaulay ring.
\end{enumerate}
\end{thm}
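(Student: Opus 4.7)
My plan for (a) combines Lemma~\ref{32} with the determinantal relation and an induction in lexicographic order. The inclusion $\fkq I^{s-1} \subseteq I^s$ is obvious, so by Lemma~\ref{32} it suffices to show $x_{1, i_1} x_{2, i_2} \cdots x_{s, i_s} \in \fkq I^{s-1}$ for every tuple $1 \le i_1 < i_2 < \cdots < i_s \le t$. Fix such a tuple; the identity
\[
\sum_{\sigma \in \mathfrak{S}_s} \sgn(\sigma)\, x_{1, i_{\sigma(1)}} x_{2, i_{\sigma(2)}} \cdots x_{s, i_{\sigma(s)}} = 0 \quad \text{in } A,
\]
coming from $\det(X_{u, i_v})_{1 \le u, v \le s} \in I_s(X_{ij})$, expresses $x_{1, i_1} \cdots x_{s, i_s}$ as a signed sum over $\sigma \neq \mathrm{id}$. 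Applying Lemma~\ref{32} to each such term yields two alternatives: either some factor $x_{u, i_{\sigma(u)}}$ already lies in $\fkq$ (in which case the product sits in $\fkq I^{s-1}$), or the product is congruent modulo $\fkq I^{s-1}$ to a standard monomial $x_{1, i'_1} \cdots x_{s, i'_s}$. A short induction on $s$, comparing the sorted multiset $\{i_{\sigma(u)} - u + 1\}$ against $\{i_u - u + 1\}$, shows that in the second case $(i'_1, \ldots, i'_s) <_{\mathrm{lex}} (i_1, \ldots, i_s)$. An induction on lex order then closes (a), with base case $(1, 2, \ldots, s)$: every non-identity $\sigma$ has some $\sigma(u) < u$, placing $x_{u, \sigma(u)}$ in $\fkq$.

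For (b), the plan is to transfer $\fkq \cap I^{n+1} = \fkq I^n$ to an analogous identity in $R$ via a change of variables. Set $Y_{i, j} := X_{i, j}$ when $(i, j)$ is outside the staircase $\{0 \le j - i \le t - s\}$ or has $i = 1$, and $Y_{i, j} := X_{i, j} - X_{i-1, j-1}$ otherwise. The change $X \mapsto Y$ is upper-triangular unipotent, so $\{Y_{ij}\}_{i,j}$ remains a regular sequence of $R$. By construction $Q$ is generated by the sub-regular sequence $\{Y_{ij} : (i, j) \ne (1, 1), \ldots, (1, t-s+1)\}$, while $L := (X_{ij})$ equals $Q + (z_1, \ldots, z_{t-s+1})$ for $z_j := X_{1, j}$. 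Hence $L$ itself is generated by the regular sequence $\{z_j\} \cup \{Y_{ij} : (i, j) \ne (1, \ast)\}$, so $\bigoplus_{n \ge 0} L^n/L^{n+1}$ is a polynomial ring over $R/L$ in the initial forms; the initial forms of the generators of $Q$ among these are a regular sequence, and Valabrega--Valla yields $Q \cap L^{n+1} = QL^n$ in $R$ for every $n \ge 0$.

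Descend to $A = R/I_s(X_{ij})$ by the modular law: for $n \le s-1$ one has $I_s(X_{ij}) \subseteq L^s \subseteq L^{n+1}$, so $(Q + I_s(X_{ij})) \cap L^{n+1} = (Q \cap L^{n+1}) + I_s(X_{ij}) = QL^n + I_s(X_{ij})$, which modulo $I_s(X_{ij})$ is exactly $\fkq \cap I^{n+1} = \fkq I^n$; for $n \ge s-1$, iterating (a) gives $I^{n+1} \subseteq \fkq I^n$ directly. Finally, under the regular-sequence hypothesis $A$ is Cohen--Macaulay, and the images of $\{Y_{ij} : (i, j) \ne (1, \ast)\}$ form a regular sequence in $A$; a second application of Valabrega--Valla inside $A$ turns $\fkq \cap I^{n+1} = \fkq I^n$ into a regular sequence of initial forms in $\calG(I)$, delivering the Cohen--Macaulayness.

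The main obstacle I anticipate is the structural identity $L = Q + (z_1, \ldots, z_{t-s+1})$ together with the recognition that the combined collection $\{z_j\} \cup \{Y_{ij} : (i, j) \ne (1, \ast)\}$ is a \emph{single} regular sequence of $R$; this is what powers the Valabrega--Valla argument in $R$ and drives the entire descent. Once that structural step is in place, both passes of Valabrega--Valla and the modular-law reduction are standard bookkeeping, and the Cohen--Macaulayness of $\calG(I)$ falls out without further combinatorics.
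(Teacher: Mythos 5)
Your part (a) is the paper's argument: reduce to standard monomials by Lemma~\ref{32}, expand the vanishing maximal minor, and induct on lexicographic order after observing that every non-identity term either lies in $\fkq I^{s-1}$ or is congruent to a strictly lex-smaller standard monomial; the base case is handled identically. Part (b) follows the paper's skeleton---prove $Q\cap L^{n+1}=QL^n$ in $R$ from the fact that the generators of $Q$ extend to a regular sequence generating $L$, descend modulo $I_s(X_{ij})$, split at $n+1\le s$ versus $n+1> s$ with (a) covering the large range, and finish with Valabrega--Valla---but your descent step is genuinely different and cleaner. The paper first replaces $Q+I_s(X_{ij})$ by $Q+(X_{11},\dots,X_{1s})^s$ via the banded-matrix identity obtained by reducing the generic matrix modulo $Q$, and afterwards must absorb $(X_{11},\dots,X_{1s})^s$ into $QL^n+I_s(X_{ij})$ by appealing to (a); you instead apply the modular law directly, using only $I_s(X_{ij})\subseteq L^s\subseteq L^{n+1}$ (valid precisely when $n+1\le s$), to get $(Q+I_s(X_{ij}))\cap L^{n+1}=(Q\cap L^{n+1})+I_s(X_{ij})=QL^n+I_s(X_{ij})$, so the banded matrix never enters and (a) is needed only for $n+1>s$; this is correct, and your band width $t-s+1$ for the $z_j$ is in fact the accurate count. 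Your route to $Q\cap L^{n+1}=QL^n$ (initial forms of the $Q$-generators are variables of the polynomial ring $\bigoplus_{n}L^n/L^{n+1}$, then Valabrega--Valla) is equivalent to the paper's Remark~\ref{remarkaa}. One shared caveat concerns the last step: what Valabrega--Valla needs in $A$ is that the images of the $Q$-generators form an $A$-regular sequence, and this (the generic-perfection input behind the reference to \cite[(9.14) Theorem]{BV}) is not a formal consequence of ``$R$ Noetherian local and $\{X_{ij}\}$ a regular sequence''; moreover your parenthetical claim that $A$ is Cohen--Macaulay---and indeed the Cohen--Macaulayness of $\calG(I)$ itself---genuinely requires $R$ to be Cohen--Macaulay. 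The paper's own proof is equally terse on exactly these points, so this is a caveat inherited from the paper rather than a gap specific to your argument.
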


\begin{proof}
(a): Set $S=\{ (i_1, i_2, \dots,i_s) \mid 1\le i_1<i_2<\cdots <i_s\le t\}$. We regard $S$ as a totally ordered set in lexicographic order. Set $S=\{\mathbf{a}_1<\mathbf{a}_2<\cdots <\mathbf{a}_{\binom{t}{s}}\}$. For $\mathbf{a}=(i_1, i_2, \dots,i_s) \in S$, let $x_{\mathbf{a}}$ denote $x_{1, i_1}x_{2, i_2} \cdots x_{s, i_s}$. With the notation, we will prove $x_{\mathbf{a}}\in \fkq I^{s-1}$ for all $\mathbf{a}\in S$ by induction with respect to the lexicographic order (this shows $I^s\subseteq \fkq I^{s-1}$ by Lemma \ref{32}). 

In what follows, all $\equiv$ are considered modulo $\fkq I^{s-1}$. For $\mathbf{a}=(i_1, i_2, \dots,i_s) \in S$, $\det \mathbf{a}$ denotes the  determinant of the submatrix of $(x_{ij})$ with respect to the columns indexed by $i_1, i_2, \dots,i_s$ and the whole rows.

$x_{\mathbf{a}_1}\in \fkq I^{s-1}$ follows from $0= \det(1,2,\dots, s) \equiv x_{11}x_{22}\cdots x_{ss}$ because $x_{ij}\in \fkq$ if $j-i<0$. 
Assume $1<u$ and $x_{\mathbf{a}_1}, \ldots, x_{\mathbf{a}_{u-1}}\in \fkq I^{s-1}$. Set $\mathbf{a}_u=(i_1, i_2, \dots,i_s)$. Then 
\[
0 = \det(i_1, i_2, \dots,i_s)=\sum_{\sigma\in S_s} \sgn(\sigma) x_{\sigma(1), i_1} x_{\sigma(2), i_2} \cdots x_{\sigma(s), i_s}.
\]
Assume $\sigma(1)>1$. Then, by the same way as in the proof of Lemma \ref{32}, we have $x_{\sigma(1), i_1} x_{\sigma(2), i_2} \cdots x_{\sigma(s), i_s} \equiv x_{\mathbf{a}'}$ for some $\mathbf{a}'=(i'_1, \dots, i'_s)$ such that $i'_1<i_1$. Thus $\mathbf{a}'<\mathbf{a_u}$. Similarly, assume that for $1<v< s$, $\sigma(1)=1, \ldots, \sigma(v-1)=v-1$, and $\sigma(v)>v$. Then we have $x_{\sigma(1), i_1} x_{\sigma(2), i_2} \cdots x_{\sigma(s), i_s} \equiv x_{\mathbf{a}'}$ for some $\mathbf{a}'<\mathbf{a_u}$. 
It follows that 
\[
0 = \det(i_1, i_2, \dots,i_s)\equiv x_{1, i_1} x_{2, i_2} \cdots x_{s, i_s}=x_{\mathbf{a}_u}
\]
by induction hypothesis, whence $x_{\mathbf{a}_u}\in \fkq I^{s-1}$.

(b): We show that $(Q + I_s(X_{ij})) \cap (L^{n+1} + I_s(X_{ij}))\subseteq QL^n + I_s(X_{ij})$ for all $n\ge 0$. If $n+1 \ge s$, then the inclusion follows from $L^{n+1} + I_s(X_{ij}) = QL^n + I_s(X_{ij})$ by (a). Assume that $n+1 <s$. Then
\begin{align*} 
(Q + I_s(X_{ij})) \cap (L^{n+1} + I_s(X_{ij})) &= (Q + I_s(X_{ij})) \cap L^{n+1}  + I_s(X_{ij})\\
&= (Q + (X_{11}, \dots, X_{1s})^s) \cap L^{n+1} + I_s(X_{ij})\\
&= Q \cap L^{n+1} + (X_{11}, \dots, X_{1s})^s + I_s(X_{ij}),
\end{align*}
where the second equality follows from 
{\small
\[
Q+I_s(X_{ij})=Q + I_s\left(
\begin{matrix} 
X_{11} & \cdots & X_{1s} & 0 & \cdots & 0\\
0 & \ddots & & \ddots & \ddots & \vdots \\
\vdots &\ddots &\ddots &&\ddots &0\\
0 & \cdots & 0 & X_{11} & \cdots & X_{1s} 
\end{matrix}
\right).
\]
}

By noting that $L=Q + (X_{11}, \dots, X_{1s})$, we obtain 
\begin{align*} 
\begin{split} 
L^{i+1} &= QL^i + (X_{11}, \dots, X_{1s})^{i+1} \quad \text{and} \quad \\
Q \cap L^{i+1}  &= QL^i + Q(X_{11}, \dots, X_{1s})^{i+1} =QL^i
\end{split}
\end{align*}
for all $i \ge 0$ because Remark \ref{remarkaa}. Hence
\begin{align*} 
Q \cap L^{n+1} + (X_{11}, \dots, X_{1s})^s + I_s(X_{ij}) = QL^n + (X_{11}, \dots, X_{1s})^s + I_s(X_{ij}).
\end{align*} 

On the other hand, by (a), we have $L^s=QL^{s-1} + (X_{11}, \dots, X_{1s})^{s} \subseteq QL^{s-1} + I_s(X_{ij})$. In particular, 
$(X_{11}, \dots, X_{1s})^{s} \subseteq QL^{s-1} + I_s(X_{ij}) \subseteq QL^n + I_s(X_{ij})$.
Thus we obtain 
\[
(Q + I_s(X_{ij})) \cap (L^{n+1} + I_s(X_{ij}))\subseteq  QL^n + (X_{11}, \dots, X_{1s})^s + I_s(X_{ij}) = QL^n + I_s(X_{ij})
\] 
as desired. It follows that $\calG(I)$ is a Cohen-Macaulay ring by \cite[Theorem 1.1]{VV}.
\end{proof}

\begin{rem} \label{remarkaa}
Let $(R, \fkm)$ be a Noetherian local ring. Let $x_1, \dots, x_m, y_1, \dots, y_n\in \fkm$ be a regular sequence of $R$. Set $I=(x_1, \dots, x_m)$ and $J=(y_1, \dots, y_n)$. Then $I^i\cap J^j=I^iJ^j$ for all $i,j\ge 0$.
\end{rem}

\begin{proof}
It is equivalent to saying that $\Tor_1^R (R/I^i, R/J^j)=0$. Because $I^i/I^{i+1}$ (resp. $J^j/J^{j+1}$) is a free $R/I$-module (resp. a free $R/J$-module), by using inductions on $i$ and $j$, it is enough to show that $\Tor_1^R (R/I, R/J)=0$. Set $I_0=R$ and $I_\ell=(x_1, \dots, x_\ell)$ for $1\le \ell \le m$. Then we prove $\Tor_1^R (R/I_\ell, R/J)=0$ for all $0\le \ell \le m$ by induction on $\ell$. The case where $\ell=0$ is clear. Assume $\ell>0$ and $\Tor_1^R (R/I_{\ell-1}, R/J)=0$. Then, by applying the functor $-\otimes_R R/J$ to the exact sequence $0\to R/I_{\ell-1} \xrightarrow{x_\ell} R/I_{\ell-1} \to R/I_\ell \to 0$, we obtain $\Tor_1^R (R/I_{\ell}, R/J)=0$.
\end{proof}

The following is a direct consequence of Theorem \ref{thm2}.

\begin{cor}
Let $s \le t$ be integers. Let $R$ be a regular local ring of dimension $st$ and $(X_{ij} \mid 1\le i \le s, 1 \le j \le t)$ the maximal ideal of $A$. Then $A=R/I_s(X_{ij})$ is a Cohen-Macaulay local ring of dimension $st - (t-s+1)$ (see for example \cite{BV}). Let $\fkm$ be the maximal ideal of $A$. Then we have the following:
\begin{enumerate}[{\rm (a)}]  	
\item there exists a parameter ideal $Q$ of $A$ such that $\fkm^{s}=Q\fkm^{s-1}$;
\item $\calG(\fkm)$ is a Cohen-Macaulay ring.
\end{enumerate}
\end{cor}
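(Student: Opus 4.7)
The plan is to deduce both assertions directly from Theorem \ref{thm2}, specialized to the case where $R$ is a regular local ring with regular system of parameters $\{X_{ij}\}_{1 \le i \le s,\, 1 \le j \le t}$. The very first observation is that $L = (X_{ij} \mid 1 \le i \le s,\, 1 \le j \le t)$ is the maximal ideal of $R$, so $I = LA$ coincides with the maximal ideal $\fkm$ of $A$; thus Theorem \ref{thm2} applies with $I = \fkm$.

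For part (a), I would apply Theorem \ref{thm2}(a) to obtain $\fkm^{s} = \fkq\, \fkm^{s-1}$, where $\fkq = QA$. To upgrade $\fkq$ to a parameter ideal of $A$, I need to check that it is $\fkm$-primary and generated by exactly $\dim A$ elements. The $\fkm$-primary property is immediate from $\fkm^{s} \subseteq \fkq$. A direct enumeration of the three families of generators of $Q$ prescribed in the setup yields
\[
\tbinom{s}{2} + \tbinom{s}{2} + (s-1)(t-s+1) \;=\; (s-1)(t+1) \;=\; st - (t-s+1) \;=\; \dim A.
\]
Since $A$ is Cohen-Macaulay and $\fkq$ is an $\fkm$-primary ideal generated by $\dim A$ elements, $\fkq$ is a parameter ideal.

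For part (b), the key input is that $X_{11}, X_{12}, \dots, X_{st}$ forms a regular sequence in $R$, which is immediate because these elements constitute a regular system of parameters of the regular local ring $R$. Theorem \ref{thm2}(b) therefore applies and yields the Cohen-Macaulayness of $\calG(I) = \calG(\fkm)$.

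There is no real obstacle here: the corollary is essentially a bookkeeping specialization of Theorem \ref{thm2}. The only mildly delicate point is matching the generator count of $Q$ to $\dim A = st - (t-s+1)$, but this reduces to the elementary identity $2\binom{s}{2} + (s-1)(t-s+1) = (s-1)(t+1)$ once the three families of generators have been listed.
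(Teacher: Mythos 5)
Your proposal is correct and follows exactly the paper's route: the paper offers no written proof beyond declaring the corollary ``a direct consequence of Theorem \ref{thm2},'' and your argument is precisely that specialization, with the bookkeeping (the count $2\binom{s}{2}+(s-1)(t-s+1)=(s-1)(t+1)=st-(t-s+1)$, the $\fkm$-primariness of $\fkq$ via $\fkm^s\subseteq\fkq$, and the observation that the $X_{ij}$ form a regular sequence) filled in correctly.
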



\end{document}